\documentclass[10pt]{article}
\usepackage{amsmath,amssymb,amsthm,amsfonts}

\newcommand{\cb}{{\boldsymbol{c}}}
\newcommand{\db}{{\boldsymbol{d}}}

\newcommand{\ub}{{\boldsymbol{u}}}
\newcommand{\vb}{{\boldsymbol{v}}}

\newcommand{\Ab}{{\boldsymbol{A}}}
\newcommand{\Bb}{{\boldsymbol{B}}}
\newcommand{\Cb}{{\boldsymbol{C}}}
\newcommand{\Db}{{\boldsymbol{D}}}


\newcommand{\cA}{{\cal A}}

\newcommand{\cE}{{\cal E}}


\newcommand{\CC}{{\mathbb C}}

 \newcommand{\NN}{{\mathbb N}}

 \newcommand{\RR}{{\mathbb R}}

 \newcommand{\ZZ}{{\mathbb Z}}

\newcommand{\supp}{{\mathop{\rm supp}\,}}

\def\bdelta{\mbox{\boldmath $\delta$}}
\newtheorem{Theorem}{Theorem}[section]

\newtheorem{Example}[Theorem]{Example}

\newtheorem{Lemma}[Theorem]{Lemma}
\newtheorem{Remark}[Theorem]{Remark}
\newtheorem{Proposition}[Theorem]{Proposition}

\topmargin-1cm \textwidth15cm \textheight24cm

\begin{document}

\begin{titlepage}

\title{ Numerical methods for checking the regularity of subdivision schemes}

\author{Maria Charina}

\end{titlepage}

\maketitle

\def\shorttitle{ Numerical methods for checking the regularity of subdivision schemes}

\def\shortauthor{M.~Charina, TU Dortmund, Germany}

\begin{abstract}
 In this paper, motivated by applications in computer graphics and animation, we study the numerical
 methods for checking $C^k-$regularity of vector multivariate  subdivision schemes with dilation $2I$.
 These numerical methods arise from the joint spectral radius and restricted spectral radius approaches,
 which were shown in \cite{C2011} to characterize $W^k_p-$regularity of subdivision in terms of the same quantity. Namely, the $(k,p)-$joint spectral radius and  the $(k,p)-$restricted 
 spectral radius are equal. We show that the corresponding numerical methods in the univariate
 scalar and vector cases even yield the same upper estimate for the $(k,\infty)-$joint spectral radius for
 a certain choice of a matrix norm. The difference between the two approaches becomes apparent in the
 multivariate case and we confirm that they indeed offer different numerical schemes for
 estimating the  regularity of subdivision. We illustrate our results with several examples.
\end{abstract}

\noindent{\bf Keywords:}
 vector multivariate subdivision schemes, joint spectral radius, restricted spectral radius

\section{ Introduction}
Subdivision schemes are recursive algorithms that starting with a coarser mesh in $\RR^s$ determine  the coordinates  of the finer
vertices $\cb^{(r+1)}$ by local averages of the
coarser ones
\begin{equation} \label{def:subrecursion}
 \cb^{(r+1)}=S_{\Ab} \cb^{(r)}, \quad r \ge 0.
\end{equation}
The subdivision operator $S_\Ab$ is linear and describes the local
averaging rules. The locality of the subdivision and algorithmic
simplicity of the subdivision recursion ensure that
\eqref{def:subrecursion} is fast, efficient, and easy to
implement. These features make subdivision popular in computer
graphics and animation, see  \cite{CDM91, Chui, DLsurvey, PReif}
and the references therein.

 We restrict our study to the shift--invariant setting and study the multivariate
vector schemes defined by the subdivision operator
$$
  S_{\Ab} \cb^{(r)}=\sum_{\beta \in \ZZ^s} A(\cdot-2\beta) c^{(r)}(\beta),
  \quad r \ge 0,
$$
mapping the space $\ell^n(\ZZ^s)$ of vector--sequences indexed by
$\ZZ^s$ into itself. The associated subdivision mask $\Ab=\left(
A(\alpha) \right)_{\alpha \in [0,N]^s}$, $N \in \NN$,
is a finitely supported matrix-valued sequence.

A challenging task is to provide a characterization of the
regularity of the limits of subdivision recursion and to develop
numerical methods for checking their regularity. Two prominent
methods that characterize the $W_p^k$-regularity, $1 \le p \le
\infty$, $k \in \NN$, of subdivision in the shift-invariant
setting are the so-called joint spectral radius (JSR) approach
\cite{CJR02, J95} and restricted spectral radius (RSR)
\cite{CDM91, CCS04} approach. Generally speaking, the joint
spectral radius approach characterizes the regularity of
subdivision limits in terms of the joint spectral radius of a
finite set
$$
 \cA=\{A_\varepsilon|_{V_k} \ : \ \varepsilon \in \{0,1\}^s\}
$$
of square matrices $A_\varepsilon$ derived from the mask $\Ab$ and
restricted to a common finite dimensional invariant subspace
$V_k$. The sequences in $V_k$ annihilate certain polynomial
eigensequences  of $S_\Ab$ and, thus, the process of restricting  of
$A_\varepsilon$ to $V_k$ mimics the computation of discrete
derivatives $\nabla^k$. The essence of the restricted spectral
radius approach can be formulated as follows: if the operator
$S_\Ab$ admits the factorization
\begin{equation} \label{iden:AnablaB}
 \nabla^k S_{\Ab}=S_{\Bb_k} \nabla^{k}, \quad k \ge 1,
\end{equation}
then the restricted spectral properties of the associated
difference subdivision schemes $S_{\Bb_k}$ characterize the
regularity of the underlying subdivision.
It had been believed until recently that the JSR and RSR
approachers are intrinsically different. In \cite{C2011, CCS05},
we showed that these two approaches characterize the
$W_k^p-$regularity, $1 \le p \le \infty$, of subdivision in terms
of the same quantity $(k,p)-$JSR and differ by the numerical
methods they yield for its estimation. Motivated by applications
in computer graphics and animation we compare in this paper only
the numerical methods for approximation of $(k,\infty)-$JSR. The problem of
computing the $(k,\infty)-$JSR is NP-hard, i.e, there is no
polynomial--time algorithm for its approximation, see \cite{T}.
The brute force Branch--and-Bound algorithm dating back to
\cite{DL1992} is available.  Its approximation  of
$(k,\infty)-$JSR is based on the estimate
\begin{equation} \label{algorithm:branch_and_bound}
 \max_{A_{\varepsilon_j}|_{V_k} \in \cA|_{V_k}} \left(\rho \left(\prod_{j=1}^r
 A_{\varepsilon_j}|_{V_k} \right)\right)^{1/r} \le \rho_\infty(\cA) \le
 \max_{A_{\varepsilon_j}|_{V_k} \in \cA|_{V_k}}  \left\|
 \prod_{j=1}^r  \right\|^{1/r},
\end{equation}
$r \in \NN$, and allows us to approximate $(k,\infty)-$JSR with an
arbitrary precision.   The estimate in \eqref{algorithm:branch_and_bound} is
independent of the choice of the matrix norm $\| \cdot \|$. The Branch--and-Bound algorithm is
computationally expensive and its rate of convergence is not
known. See \cite{GPro, HReif} for a successful attempt to reduce
the computational complexity of the Branch--and-Bound algorithm. The restricted spectral radius
approach leads to linear programming, see \cite{C2011}, that due
to the usual properties of the spectral radii  yields another
upper bound for $(k,\infty)-$JSR. The computations in \cite{Os}
show that the lower bound in \eqref{algorithm:branch_and_bound} is
not always reliable. 

The main goal of this paper is to compare the numerical methods
arising from the joint and restricted spectral radius approaches
for approximation the $(k,\infty)-$JSR. In section
\ref{subsec:univ} we show that if we choose the matrix norm $\| \cdot \|$ in
\eqref{algorithm:branch_and_bound} to be the infinity matrix norm,
then the upper bound in \eqref{algorithm:branch_and_bound} and the
one obtained using the optimization approach in \cite[Section
4.6]{C2011} coincide in the univariate scalar and vector cases.
Note that the flexibility of the joint spectral radius approach is
that the estimate in \eqref{algorithm:branch_and_bound} is
independent of the choice of the matrix norm $\| \cdot \|$.  This
in some cases leads to  a sharper upper estimate for
$(k,\infty)-$JSR, see example \ref{ex1} or even allows for exact
computations of the $(k,\infty)-$JSR, see \cite{GPro, HReif}. The results
of section \ref{subsec:univ} also show how one can easily determine the entries 
of the matrices $ A_{\varepsilon_j}|_{V_k}$ from the entries of the
corresponding difference maks $\Bb_k$. 
In section
\ref{subsec:multi} we show that in the multivariate scalar or
vector cases the Branch--and-Bound algorithm and the optimization
method presented in \cite[Section 4.6]{C2011}  are intrinsically
different and yield different upper bounds for $(k,\infty)-$JSR
regardless of the choice of the matrix norm in
\eqref{algorithm:branch_and_bound}. In Section
\ref{example:divergent}, we compare the method for estimation of
the $(k,\infty)-$JSR from \cite{LW95} and the one based on the
properties of the difference masks on an example of a divergent
scheme. To summarize, examples illustrate that it is impossible to
prefer one of  
the numerical methods discussed in this paper  over the other  and they
can be used according to one's personal preference.

\section{Notation and Background}

 An element $\mu=(\mu_1,\dots,\mu_s) \in
\NN_0^s$ is a multi--index whose length is given by
$|\mu|:=\mu_1+\dots + \mu_s$ and $\mu!:=\mu_1! \dots \mu_s!$.
 For $\alpha=(\alpha_1, \dots, \alpha_s) \in \ZZ^s$ and
$\mu=(\mu_1, \dots, \mu_s) \in \NN^s_0$ define
$$
 \alpha^\mu:=\alpha_1^{\mu_1} \dots \alpha_s^{\mu_s}.
$$
Denote by $\epsilon_\ell$, $\ell=1,\dots,s$,  the $\ell-$th
standard unit vector of $\RR^s$ and by $e_j$, $j=1, \dots,  n$,
the $j-$th standard unit vector of $\RR^n$, respectively. Let
$\ell^{n \times k} \left( \ZZ^s \right)$ denote the linear space
of all sequences of $n \times k$ real matrices indexed by $\ZZ^s$.
In addition, let $\ell^{n \times d}_\infty \left( \ZZ^s \right)$
denote the Banach space of sequences of $n \times d$ real matrices
indexed by $\ZZ^s$ with finite $\infty$-norm defined as
\begin{equation}\label{L-p-norms}
 \|\Cb \|_\infty  :=\displaystyle{ \sup_{\alpha \in \ZZ^s} \left| C
(\alpha) \right|_\infty},
\end{equation}
where $|C(\alpha)|_\infty$ is the $\infty$-operator norm on
$\RR^n$ if $d>1$ and the $\infty$-vector norm if $d=1$.  Moreover,
let $\ell_0^{n\times d} \left( \ZZ^s \right) \subset
\ell_\infty^{n \times d}(\ZZ^s)$ be the space of finitely
supported matrix valued sequences. Specific examples of such
scalar and vector sequences is the scalar delta sequence
$\mbox{\boldmath $\delta$}\in \ell_0(\ZZ^s)$ and the vector
sequence $\mbox{\boldmath $\delta$} e_j^T \in \ell^{1 \times n
}_0(\ZZ^s)$ defined by
\begin{equation}
 \delta(\alpha):=\left\{
 \begin{array}{cl} 1, & \alpha = 0, \\
 0, & \alpha \in \ZZ^s \setminus \{ 0 \}
 \end{array} \right. \quad \hbox{and} \quad
 \delta e_j^T(\alpha):=\left\{
 \begin{array}{cl} e_j^T, & \alpha = 0, \\
 0, & \alpha \in \ZZ^s \setminus \{ 0 \}.
 \end{array} \right.
\end{equation}

Let $\Ab \in \ell_0^{n \times n}(\ZZ^s)$ be a finitely supported
matrix sequence shifted so that $\supp \Ab \subseteq [0,N]^s$ for
some  $N \in \NN$.
The \emph{subdivision operator} $S_\Ab: \ell^{n} \left( \ZZ^s
\right)\rightarrow \ell^{n} \left( \ZZ^s \right)$ associated with
the \emph{mask} $\Ab \in \ell_0^{n \times n}(\ZZ^s)$ is defined by
\begin{equation} \label{def:S_a}
 S_\Ab\cb(\alpha)=\sum_{\beta\in
 \ZZ^s}A(\alpha-2\beta)c(\beta),\quad \alpha \in \ZZ^s.
\end{equation}
\noindent The \emph{subdivision scheme}
then corresponds to a repeated application of $S_\Ab$ to an
initial vector sequence $\cb \in\ell^{n } \left( \ZZ^s \right)$
yielding
\begin{equation}\label{subdivision}
\cb^{(0)}:=\cb, \quad \cb^{(r+1)}:=S_\Ab\ \cb^{(r)},\quad r\ge 0.
\end{equation}
The dimension of the following subspace of $\RR^n$ 
\begin{equation} \label{def:E_A}
\cE_\Ab := \left\{ v \in \RR^n \;:\;  \sum_{\alpha \in \ZZ^s}
A(\varepsilon-2\alpha) v=v, \, \varepsilon \in \{0,1\}^s \right\}
\end{equation}
determines the structure
of the difference operators $\nabla^k$ we define next.
Let $m:=\hbox{dim}(\cE_\Ab)$. For $\Cb \in \ell^{n \times
d}(\ZZ^s)$ and $\Db \in \ell^{d \times n}(\ZZ^s)$ we define the
$j-$th column  of $\nabla_\ell \Cb$, $\nabla_\ell : \ell^{n \times
d}(\ZZ^s) \rightarrow \ell^{n \times d}(\ZZ^s)$ as
\begin{equation} [\nabla_\ell \Cb]_{\cdot,j}:= \left[ \begin{array}{c}
-\Cb_{1,j}+\Cb_{1,j}(\cdot-\epsilon_\ell) \\
\vdots \\
-\Cb_{m,j}+\Cb_{m,j}(\cdot-\epsilon_\ell) \\
\Cb_{m+1,j} \\ \vdots \\
\Cb_{n,j}\end{array} \right], \quad 1 \le \ell \le s, \quad 1 \le
j \le d,
\end{equation}
 and $\nabla_\ell \Db:= \left(
\nabla_\ell \Db^T\right)^T$, respectively. Let $k \in \NN$.
 For our analysis we make use of the notion of the
$k-$th difference operator, the discrete analog of a
derivative. The $k-$th \emph{difference operator} $\nabla^k
\;:\; \ell^{n \times d} \left( \ZZ^s \right) \to \ell^{n N_{s,k}
\times d} \left( \ZZ^s \right)$, \small
$N_{s,k}=\left(\begin{array}{c} s+k-1 \\ s-1 \end{array} \right)$,
\normalsize is defined by
\begin{equation} \label{def:nabla}
\nabla^k: =\left[ \begin{array}{cc}
 \nabla_1^{\mu_1} \dots \nabla_s^{\mu_s} I_m & 0 \\ 0  & I_{n-m}
 \end{array}\right]_{|\mu| = k}.
\end{equation}

 We say that the subdivision scheme $S_\Ab$ is {\it
$C^k-$convergent}, if for any starting sequence $\cb \in
\ell_\infty^n(\ZZ^s)$ there exists a vector--valued function
$f_\cb \in (C^k(\RR^s))^n$ such that for any compactly supported
stable test function $g \in C^k(\RR^s) $
\begin{equation}
\label{def:convergenceFunction}
 \lim_{r \rightarrow \infty} \max_{|\mu| \le k} \|D^\mu f_\cb -
 g I_n * (2^{|\mu| r} \nabla_1^{\mu_1} \dots \nabla_s^{\mu_s} S_\Ab^r \cb)(2^r \cdot)
 \|_{\infty} =0.
\end{equation}
For more details on the properties of test functions see
\cite{DM97}.

Another useful tool for studying subdivision schemes is the
Laurent polynomial formalism. For a finite matrix sequence $\Ab
\in \ell_0^{n\times k}(\ZZ^s)$ we define the associated
\emph{symbol} as the \emph{Laurent polynomial}
\begin{equation}
 A^* (z):=2^{-s} \sum_{\alpha \in \ZZ^s} A(\alpha) \, z^{\alpha},
 \qquad z \in \left( \CC \setminus \{0\} \right)^s,
\end{equation}
where, in the usual multi--index notation,
$z^{\alpha}=z_1^{\alpha_1} \cdot \ldots \cdot z_s^{\alpha_s}$.

The concept of the joint spectral radius  was first introduced in
\cite{RS60} and was rediscovered in \cite{DL1992} for
characterizing the continuity of univariate subdivision limits.
For extensions of the concept of the JSR  see \cite{CJR02, J95}.
Generally speaking, the joint spectral radius approach
characterizes the $W^k_p-$regularity of subdivision limits in
terms of $(k,p)-$joint spectral radius (JSR)  of a finite set of linear operators. These are
derived from the linear operators $\cA_\varepsilon: \ell_0^{1
\times n}(\ZZ^s) \rightarrow \ell_0^{1 \times n}(\ZZ^s)$
satisfying
$$
 \cA_\varepsilon \vb = \sum_{\alpha \in \ZZ^s} v(\alpha) A(\varepsilon+2 \cdot -\alpha),
 \quad \vb \in \ell_0^{1 \times n}(\ZZ^s), \quad \varepsilon \in \{0,1\}^s,
$$
and then restricted to the invariant subspace $V_k \subset
\ell_0^{1 \times n}(\ZZ^s)$. The elements of the finite
dimensional subspace
\begin{eqnarray}\label{def:Vvector}
 V_k&:=&\{\vb \in \ell^{1 \times n}([0,N]^s): \sum_{\beta \in \ZZ^s} v(\beta) u(-\beta)=0 \
  \hbox{for all} \ \ub \in U_k\},
\end{eqnarray}
annihilate the polynomial sequences in $U_k \subset
\ell^{n}(\ZZ^s)$  reproduced by one step of subdivision recursion
in \eqref{subdivision}. For details on the structure of $U_k$ see
\cite{C2011}. Thus, the process of restricting to the subspace
$V_k$ imitates the computation of the partial derivatives of
subdivision limits function, see \cite[Section 2.3]{CHM04}. As the
subspace $V_k$ is finite dimensional we replace the finite set of
operators
\begin{equation}\label{def:ArestrictedV}
 \cA|_{V_k}:= \left\{ \cA_\varepsilon|_{V_k} \,:\, \varepsilon \in
 \{0,1\}^s  \right\}
\end{equation}
by a finite set of square matrices
\begin{equation} \label{def:Aepsilon_matrix}
 A_\varepsilon:=\left[ A^T(\varepsilon+2\alpha-\beta) \right]_{\alpha, \beta \in [0,N]^s},
 \quad \varepsilon \in \{0,1\}^s,
\end{equation}
and use them for  estimations of the $(k,\infty)-$joint spectral
radius
\begin{equation} \label{def:JSR}
  \rho_\infty(\cA|_{V_k}):=\displaystyle{\lim_{r \rightarrow \infty}
         \max_{A_{\varepsilon_j}|_{V_k} \in \cA|_{V_k}}  \left\|
 \prod_{j=1}^r A_{\varepsilon_j}|_{V_k} \right\|^{1/r}}.
\end{equation}


For a difference scheme $ S_{\Bb_k}$, $k \ge 1$, satisfying
\eqref{iden:AnablaB} define the {\it restricted $(k,\infty)$-norm}
\begin{eqnarray}\label{def:restricted_norm}
 \|S_{\Bb_k} |_{\nabla^{k}}\|_\infty:&=&  \sup \left \{ {\| S_{ \Bb_k} \nabla^{k} \cb\|_\infty
 \over \|\nabla^{k} \cb\|_\infty}: \  \cb \in  \ell_\infty^n(\ZZ^s), \nabla^{k} \cb \not=0
\right\}
\end{eqnarray}
and the {\it restricted $(k,\infty)-$spectral radius} ($(k,\infty)$-RSR)
\begin{equation} \label{def:RSR}
 \rho_\infty(S_{ \Bb_k}|_{\nabla^{k}}):= \lim_{r \rightarrow \infty} \|S^r_{
\Bb_k}|_{\nabla^k}\|_\infty^{1/r}.
\end{equation}
For the results on existence of $S_{\Bb_k}$ see e.g. \cite{CDM91,C2011}.
To estimate the $(k,\infty)-$RSR we use the standard properties of
spectral radii
$$
   \rho_\infty(S_{\Bb_{k}}|_{\nabla^{k}})=\inf_{r \in \NN} \|S_{\Bb_k}^r
   |_{\nabla^k}\|_\infty^{1/r}
$$
and get the following estimate
\begin{equation} \label{estimate:RSR}
   \rho_\infty(S_{\Bb_{k}}|_{\nabla^{k}}) \le  \|S_{\Bb_k}^r |_{\nabla^k}\|_\infty^{1/r}
\end{equation}
for any $r \in \NN$. The restricted norm $\|S_{\Bb_k}^r
|_{\nabla^k}\|_\infty$ can be computed using linear programming in
\cite[Section 4.6]{C2011}. The main result of \cite{C2011} shows that
$(k,p)-$JSR and $(k,p)-$RSR are equal. 
 
\section{Methods for approximating $(k,\infty)-$JSR} \label{sec:comparison}

In this section we show how the entries of the matrices
$A_{\varepsilon_j}|_{V_k}$ and their products depend on the
entries of the difference masks $\Bb^{(r)}_k$, $r \ge 1$ and $1 \le k <
N$. In particular, we show that  the upper bounds in
\eqref{algorithm:branch_and_bound} and \eqref{estimate:RSR} on the
$(k,\infty)-$joint spectral radius coincide in scalar and vector
univariate cases for the special choice of the matrix norm, i.e.
\begin{equation} \label{identity:univ_equality}
 \|S_{\Bb_k}^r |_{\nabla^k}\|_\infty=\|S_{\Bb_k}^r \|_\infty= \max_{A_{\varepsilon_j}|_{V_k} \in \cA|_{V_k}}  \left\|
 \prod_{j=1}^r A_{\varepsilon_j}|_{V_k} \right\|_\infty.
\end{equation}
Varying the matrix norm in \eqref{algorithm:branch_and_bound} can
lead to a better or a worse upper estimates of the joint spectral
radius, see Example \ref{ex1}. In the scalar or vector
multivariate case, the identity
$$
 \|S_{\Bb_k}^r \|_\infty= \max_{A_{\varepsilon_j}|_{V_k} \in \cA|_{V_k}}  \left\|
 \prod_{j=1}^r A_{\varepsilon_j}|_{V_k} \right\|_\infty
$$
does not hold in general, see subsection \ref{subsec:multi}.
Therefore, the JSR and RSR approaches lead to intrinsically
different numerical methods for checking regularity of
subdivision.

An important observation that connects the entries of the matrices
$A_{\varepsilon_j}|_{V_k}$ with the ones of the difference masks
$\Bb_k$, $k \ge 1$, states the following.

\begin{Proposition} \label{prop:main} For $k,r \ge 1$, $\varepsilon_j \in \{0,1\}^s$ and $\widetilde{\beta} \in \ZZ^s$
we have
$$
  \cA_{\varepsilon_r} \dots \cA_{\varepsilon_1} \nabla^k \bdelta I_n(\cdot-\widetilde{\beta})= \sum_{\beta \in \ZZ^s}
  B^{(r)}_{k}(\varepsilon_1+\dots +
  2^{r-1}\varepsilon_r+2^r\beta- \widetilde{\beta}) \nabla^k
  \bdelta I_n(\cdot-\beta) .
$$
\end{Proposition}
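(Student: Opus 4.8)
The plan is to separate the identity into two independent computations and then match them through convolution bookkeeping. The first expresses the composed operators $\cA_{\varepsilon_r}\cdots\cA_{\varepsilon_1}$ purely in terms of the iterated mask $A^{(r)}$ of $S_\Ab^r$; the second translates the factorization $\nabla^k S_\Ab=S_{\Bb_k}\nabla^k$ into a relation between $A^{(r)}$ and the iterated difference mask $B^{(r)}_k$ of $S_{\Bb_k}^r$ evaluated against $\nabla^k\bdelta I_n$. Throughout I write $M:=\nabla^k\bdelta I_n\in\ell_0^{nN_{s,k}\times n}(\ZZ^s)$ and extend each $\cA_\varepsilon$ from row sequences to matrix sequences with $n$ columns by right multiplication, so that $(\cA_\varepsilon\Phi)(\gamma)=\sum_{\alpha\in\ZZ^s}\Phi(\alpha)\,A(\varepsilon+2\gamma-\alpha)$; this is the reading that makes the left-hand side well defined, since the $nN_{s,k}$ rows of $M$ are exactly the row sequences on which $\cA_\varepsilon$ acts.

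\emph{Step 1 (composed operators via the iterated mask).} I would first prove, by induction on $r$, that for every matrix sequence $\Phi$ with $n$ columns
\begin{equation}\label{plan:step1}
 (\cA_{\varepsilon_r}\cdots\cA_{\varepsilon_1}\Phi)(\gamma)=\sum_{\alpha\in\ZZ^s}\Phi(\alpha)\,A^{(r)}\Big(\textstyle\sum_{j=1}^r 2^{j-1}\varepsilon_j+2^r\gamma-\alpha\Big),
\end{equation}
where $A^{(r)}$ is the mask of $S_\Ab^r$. The base case $r=1$ is the definition of $\cA_{\varepsilon_1}$. For the inductive step I would apply $\cA_{\varepsilon_r}$ to the case $r-1$ and collapse the inner sum over the intermediate index using the mask-composition identity $A^{(r)}(\mu)=\sum_{\beta}A^{(r-1)}(\mu-2^{r-1}\beta)A(\beta)$ together with the change of variable $\beta\mapsto\varepsilon_r+2\gamma-\beta$; the weights in $\sum_j 2^{j-1}\varepsilon_j$ arise precisely from this substitution. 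Equivalently, \eqref{plan:step1} says that $\cA_{\varepsilon_r}\cdots\cA_{\varepsilon_1}\Phi$ is the convolution $\Phi*A^{(r)}$ sampled at the dilated, shifted argument $\sum_j 2^{j-1}\varepsilon_j+2^r\gamma$.

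\emph{Step 2 (dual form of the factorization).} Since $\nabla^k$ is linear and commutes with shifts on $\ZZ^s$ (its block structure splits the $n$ components along a fixed, shift-independent decomposition), it is a left-convolution operator with impulse response $M$, i.e.\ $\nabla^k\Cb=M*\Cb$ for every $\Cb\in\ell^{n\times n}(\ZZ^s)$. Iterating the factorization \eqref{iden:AnablaB} gives $\nabla^k S_\Ab^r=S_{\Bb_k}^r\nabla^k$, and applying both sides to $\bdelta I_n$, using $S_\Ab^r(\bdelta I_n)=A^{(r)}$ and that $S_{\Bb_k}^r$ has mask $B^{(r)}_k$, yields
\begin{equation}\label{plan:step2}
 (M*A^{(r)})(c)=\sum_{\beta\in\ZZ^s}B^{(r)}_k(c-2^r\beta)\,M(\beta),\qquad c\in\ZZ^s.
\end{equation}

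\emph{Step 3 (combine).} Finally I would apply \eqref{plan:step1} to the shifted input $\Phi=M(\cdot-\widetilde\beta)$, rewrite the result as $(M*A^{(r)})\big(\sum_j 2^{j-1}\varepsilon_j+2^r\gamma-\widetilde\beta\big)$, insert \eqref{plan:step2} with $c=\sum_j 2^{j-1}\varepsilon_j+2^r\gamma-\widetilde\beta$, and reindex $\beta\mapsto\gamma-\beta$. The relation $2^r\gamma-2^r\beta=2^r(\gamma-\beta)$ then turns the argument of $B^{(r)}_k$ into $\sum_j 2^{j-1}\varepsilon_j+2^r\beta-\widetilde\beta$ and leaves $M(\gamma-\beta)=\nabla^k\bdelta I_n(\gamma-\beta)$, which is the claimed identity evaluated at the running point $\gamma$. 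The main obstacle is purely one of bookkeeping: the two convolutions enter with opposite multiplication orders (the operators $\cA_\varepsilon$ multiply $\Phi$ by $A$ on the right, whereas $\nabla^k$ multiplies by $M$ on the left), so I must verify that both genuinely reduce to the single commutative convolution $M*A^{(r)}$, and that the $2^r$-dilation in \eqref{plan:step2} is handled consistently with the shift by $\widetilde\beta$ before reindexing.
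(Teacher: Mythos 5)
Your proof is correct, and its skeleton is the same as the paper's: both arguments combine (a) the identity expressing products of the transition operators $\cA_{\varepsilon_j}$ applied to $\nabla^k \bdelta I_n$ as the sequence $\nabla^k S_\Ab^r \bdelta I_n$ sampled along dyadic cosets, with (b) the iterated factorization $\nabla^k S_\Ab^r = S_{\Bb_k}^r \nabla^k$, after which the coefficients are read off from the mask $\Bb_k^{(r)}$ of $S_{\Bb_k}^r$ by the convolution bookkeeping you describe. The difference is that the paper obtains ingredient (a) by citing \cite[Lemma 2.2]{HJ98}, whereas you prove it by induction on $r$ through the iterated mask $A^{(r)}$ and the composition rule $A^{(r)}(\mu)=\sum_\beta A^{(r-1)}(\mu-2^{r-1}\beta)A(\beta)$; this makes your argument self-contained at the cost of carrying out the induction. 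It also buys a genuine gain in precision: the paper's proof states the cited identity for the product $\cA_{\varepsilon_1}\cdots\cA_{\varepsilon_r}$ while the Proposition concerns $\cA_{\varepsilon_r}\cdots\cA_{\varepsilon_1}$, leaving the reader to sort out which factor order binds each $\varepsilon_j$ to the weight $2^{j-1}$; your induction fixes this binding unambiguously (the innermost, first-applied factor $\cA_{\varepsilon_1}$ carries weight $1$), in agreement with the statement. Two minor remarks: the phrase ``commutative convolution'' is loose, since convolution of matrix-valued sequences is not commutative --- what you actually verify, and what is needed, is that both computations produce the same convolution $\sum_\alpha M(\alpha)\,A^{(r)}(\cdot-\alpha)$ with the values of $M=\nabla^k\bdelta I_n$ multiplying on the left; and your representation $\nabla^k\Cb = M*\Cb$ is justified exactly as you indicate, by the shift-invariance of the fixed block splitting in the definition of $\nabla^k$.
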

\begin{proof} Let $r \ge 1$ and $\widetilde{\beta} \in \ZZ^s$.
By \cite[Lemma 2.2]{HJ98} we have
$$
 \nabla^k S_\Ab^r \bdelta I_n(\alpha-\widetilde{\beta})=
 \cA_{\varepsilon_1} \dots \cA_{\varepsilon_r} \nabla^k
 \bdelta I_n(\gamma-\widetilde{\beta}),
$$
where $\alpha=\varepsilon_1+\dots+ 2^{r-1}\varepsilon_1+2^r
\gamma$, $\gamma \in \ZZ^s$. Note that the application of
$\cA_{\varepsilon}$  to $\nabla^k \bdelta I_n$ means that the operator
$\cA_{\varepsilon}$  is applied to each row sequence of $\nabla^k
\bdelta I_n$ separately. Next using the identity \eqref{iden:AnablaB}
we obtain
$$
 S_{\Bb_k}^r \nabla^k \bdelta I_n (\alpha-\widetilde{\beta}) =\cA_{\varepsilon_1} \dots \cA_{\varepsilon_r} \nabla^k
 \bdelta I_n(\gamma-\widetilde{\beta}), \quad \gamma \in \ZZ^s.
$$
The definition of $S^r_{\Bb_k}$  and the definition of the
standard convolution operator $*$ yield that the elements of the
sequence $\cA_{\varepsilon_1} \dots \cA_{\varepsilon_r} \nabla^k
\bdelta I_n(\cdot-\widetilde{\beta})$ have the following
representation
$$
 \cA_{\varepsilon_1} \dots \cA_{\varepsilon_r} \nabla^k
 \bdelta I_n(\gamma-\widetilde{\beta})= \left(
 \nabla^k \bdelta I_n * \Bb_k^{(r)}(\varepsilon_1+\dots +
  2^{r-1}\varepsilon_r+2^r\cdot -\widetilde{\beta}) \right)(\gamma), \quad
  \gamma \in \ZZ^s,
$$
in terms of the elements of the subsequence
$\Bb_k^{(r)}(\varepsilon_1+\dots +
  2^{r-1}\varepsilon_r+2^r\cdot -\widetilde{\beta})$ of $\Bb^{(r)}_k$.
Thus, the claim follows.
\end{proof}

\subsection{Univariate scalar and vector cases} \label{subsec:univ}

Let us see how to use the result of Proposition \ref{prop:main} to
compare the upper bound in \eqref{algorithm:branch_and_bound} and
\eqref{estimate:RSR}. For that we need the following auxiliary
lemma.

\begin{Lemma} \label{lemma:restricted=nonrestricted}
For $r \ge 1$, $1 \le k < N$, and $S^{r}_{\Bb_k} : \ell^{n}(\ZZ)
\rightarrow \ell^n(\ZZ)$ we have
$$
 \|S^{r}_{\Bb_k}
 |_{\nabla^k}\|_\infty=\|S^{r}_{\Bb_k}\|_\infty.
$$
\end{Lemma}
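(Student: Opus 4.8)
The plan is to establish the two inequalities separately. The bound $\|S^{r}_{\Bb_k}|_{\nabla^k}\|_\infty \le \|S^{r}_{\Bb_k}\|_\infty$ is immediate: writing $\db := \nabla^k\cb$, the restricted norm is the supremum of $\|S^{r}_{\Bb_k}\db\|_\infty/\|\db\|_\infty$ taken over the sequences $\db$ in the range $W := \nabla^k\big(\ell_\infty^n(\ZZ)\big)$, whereas the operator norm is the supremum of the same quotient over all of $\ell_\infty^n(\ZZ)$; since $W \subseteq \ell_\infty^n(\ZZ)$, the restricted supremum cannot exceed the unrestricted one. The content of the lemma is the reverse inequality.

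For the reverse inequality I would use that $S^{r}_{\Bb_k}$ is a finitely supported, $2^r$-shift-invariant convolution-type operator, so its $\infty$-operator norm equals a maximal absolute row sum and, because only finitely many rows occur up to the $2^r$-periodicity, this maximum is attained. Thus there are an output location $(\alpha^*,i^*)$ and an input $\vb^*$ with $\|\vb^*\|_\infty = 1$ and entries $\pm 1$ for which $[S^{r}_{\Bb_k}\vb^*](\alpha^*)_{i^*} = \|S^{r}_{\Bb_k}\|_\infty$; moreover this value depends only on the restriction of $\vb^*$ to a finite window $F \subset \ZZ$ of input indices. It then suffices to produce $\cb \in \ell_\infty^n(\ZZ)$ whose image $\db := \nabla^k\cb$ agrees with $\vb^*$ on $F$ while keeping $\|\db\|_\infty \le 1$: feeding this $\db$ into the restricted quotient reproduces the full operator norm and yields the desired $\ge$.

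The heart of the matter is therefore a local surjectivity property: every finite pattern bounded by one should be realizable as $\nabla^k$ of a bounded sequence without increasing the $\infty$-norm. Here the block structure $\nabla^k = \diag\big(\nabla_1^k I_m,\, I_{n-m}\big)$ is decisive. The last $n-m$ coordinates pass through the identity and match $\vb^*$ on $F$ trivially. For each of the first $m$ coordinates I would extend the prescribed values on $F$ to a $T$-periodic scalar sequence of mean zero and sup-norm $\le 1$, which is possible once $T$ is large enough that the free slots absorb the prescribed sum; since the symbol $(1-z^{-1})^k$ of $\nabla_1^k$ vanishes only at $z=1$, a $T$-periodic sequence lies in the range of $\nabla_1^k$ with a $T$-periodic (hence bounded) preimage precisely when its mean over a period vanishes. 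This supplies the bounded $\cb'$ needed for the first $m$ coordinates, and assembling $\cb$ from the two blocks completes the construction.

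The step I expect to be the main obstacle is exactly this range characterization, and it is where univariateness is essential: in one variable $\nabla^k$ reduces to the single operator $\nabla_1^k$, whose symbol has an isolated zero, so "range $=$ mean zero" holds and local surjectivity follows. In several variables $\nabla^k$ stacks the mixed differences $\nabla_1^{\mu_1}\cdots\nabla_s^{\mu_s}$ with $|\mu|=k$, and their joint range is cut out by compatibility conditions among the blocks, so the clean mean-zero description is lost; this is the structural reason the identity is expected to break down in the multivariate case of Subsection \ref{subsec:multi}. Once local surjectivity is available, comparing the two suprema is routine.
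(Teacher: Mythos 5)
Your proof is correct, but its decisive step runs along a genuinely different track from the paper's. The paper never argues directly from the definition \eqref{def:restricted_norm}: it quotes \cite[Proposition 6.10]{D92} to express $\|S^r_{\Bb_k}\|_\infty$ as the linear program \eqref{eq:NormInfty}, quotes \cite[Section 4.6]{C2011} to express the restricted norm as the linear program \eqref{eq:LinOptInfty} --- whose constraints $-1\le\nabla^k c_j(\beta)\le 1$ are imposed \emph{only} on the finite window $\beta\in[-N-k,0]$ --- and then identifies the optima of the two programs via the substitution $d_j(\beta)=\nabla^k c_j(\beta)$, legitimate because the associated bi-diagonal system is invertible, so every sign pattern on the window is hit by some finitely supported $\cb$. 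You instead work with the definition itself: the unrestricted norm is an attained maximal absolute row sum, and you realize the extremal $\pm 1$ window pattern as $\nabla^k\cb$ for a bounded $\cb$ satisfying the \emph{global} bound $\|\nabla^k\cb\|_\infty\le 1$, via a $T$-periodic mean-zero extension together with the observation that the symbol of $\nabla_1^k$ vanishes only at $z=1$. Both proofs hinge on realizing the sign pattern inside the range of $\nabla^k$, but your construction buys global control that the paper's construction does not provide by itself: the paper's solution $\cb^{(i,\alpha)}\in\ell^{n}_\infty([-N-2k,0])$ matches the pattern on the window, yet its $k$-th differences just outside the window are (up to sign) partial sums of that pattern and can exceed $1$, so taken literally it does not certify $\|\nabla^k\cb\|_\infty\le 1$; this is harmless for the paper only because the window-only constraint formulation is imported wholesale from \cite{C2011}, where that burden is carried. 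Your argument makes the lemma self-contained on exactly this point, at the price of the periodic-extension construction. Your closing remark --- that for $s>1$ the range of the stacked operator $\left(\nabla_1^{\mu_1}\cdots\nabla_s^{\mu_s}\right)_{|\mu|=k}$ is cut out by compatibility relations between the blocks, destroying the clean mean-zero description --- is also the correct structural reason the identity breaks down in Subsection \ref{subsec:multi}, consistent with \cite[Example 5.2]{C2011}.
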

\begin{proof} We consider the vector case first.
Due to \cite[Proposition 6.10]{D92}, the non-restricted operator
$(k,\infty)-$norm $\|S^{r}_{\Bb_k}\|_\infty$ is defined as usual by
$$
 \|S^{r}_{\Bb_k}\|_\infty=\max \left\{  \|S^r_{\Bb_k} \nabla^k \cb \|_\infty \ :
 \ \cb \in \ell_\infty^n(\ZZ), \ \|\nabla^k \cb\|_\infty=1 \right\}
$$
and can be  computed as follows
\begin{equation} \label{inftynorm}
\|S^r_{\Bb_k}\|_\infty = \max_{\alpha \in \{0,2^r-1\}} \left\{
  \left| \sum_{\beta \in \ZZ} |B^{(r)}_{k}(\alpha-2^r\beta)| \right|_\infty
  \right\}.
\end{equation}
The absolute value of the matrices $B^{(r)}_{k}$ in
\eqref{inftynorm} means that we take the absolute values of each
matrix entries. Note that, due to the compact support of $\Ab$ and the definition of $\nabla^k$, we
have $\hbox{supp}(\Bb^{(r)}_k) \subset [0,N+k]$ for all $r \ge 1$.
Thus, \eqref{inftynorm} is equivalent to the following linear
programming: for $i=1, \dots ,n$ and $\alpha \in \{0,2^r-1\}$
\begin{equation} \label{eq:NormInfty}
\begin{array}{c} \displaystyle{\max \sum_{\beta\in  [-N-k,0]}
\sum_{j=1}^{n}
   (B^{(r)}_{k})_{ij} \left( \alpha - 2^r \beta \right) d_j(\beta)} \\ \\
   -1\le d_j(\beta) \le 1, \quad \beta\in  [-N-k,0], \quad j=1, \dots,
   n.
   \end{array}
\end{equation}
For fixed $i$ and $\alpha$, the solution $\db^{(i,\alpha)} \in
\ell^{n}_\infty([-N-k,0])$ of this optimization problem is given
by
$$
 d^{(i,\alpha)}_j(\beta)=\hbox{sgn}(B_k^{(r)})_{ij} \left( \alpha - 2^r \beta
\right), \quad \beta \in [-N-k,0].
$$
We get the maximizing sequence  $\db \in
\ell^{n}_\infty([-N-k,0])$ for all $i$ and $\alpha$ choosing
$\db=\db^{(i,\alpha)}$ such that the number determine by linear
programming in \eqref{eq:NormInfty} for this $\db$ is maximal.

On the other hand, from \cite{C2011} the computation of the
restricted $(k, \infty)-$norm is equivalent to the following
linear programming: for $i=1, \dots ,n$ and $\alpha \in
\{0,2^r-1\}$
\begin{equation} \label{eq:LinOptInfty}
\begin{array}{c} \displaystyle{\max \sum_{\beta\in  [-N-k,0]}
\sum_{j=1}^{n}
   (B^{(r)}_{k})_{ij} \left( \alpha - 2^r \beta \right) \nabla^k c_j(\beta)} \\ \\
   -1\le \nabla^k c_j(\beta) \le 1, \quad \beta\in  [-N-k,0], \quad j=1, \dots,
   n.
   \end{array}
\end{equation}
For fixed $i$ and $\alpha$, the solution $\cb^{i,\alpha} \in
\ell^{n}_\infty([-N-2k,0])$ of this optimization problem is given
for each $j$ by  the linear system of equations $\nabla^k
c^{i,\alpha}_j(\beta)=\hbox{sgn}(B_k^{(r)})_{ij} \left( \alpha -
2^r \beta \right)$, $\beta \in [-N-k,0]$. The solutions $c_j$
exist and are unique, due to the invertibility of the
corresponding matrix of this system, which is bi-diagonal with
$-1$ and $1$ on the main and upper diagonals, respectively. We get
the maximizing sequence  $\cb \in \ell^{n}_\infty([-N-2k,0])$ for
all $i$ and $\alpha$ choosing $\cb=\cb^{(i,\alpha)}$ such that the
number determine by linear programming in \eqref{eq:LinOptInfty}
for this $\cb$ is maximal. Therefore, \eqref{lemma:restricted=nonrestricted} is
satisfied.

In the scalar univariate case the proof is analogous, one should
only take into account that the support of $\Bb_k$ decreases with
$k$ and replace above $[0,N+k]$ by $[0,N-k]$ and $[-N-k,0]$ by
$[-N+k,0]$, respectively.
\end{proof}

Combining Proposition \ref{prop:main} and Lemma
\ref{lemma:restricted=nonrestricted} we obtain the following
result.

\begin{Theorem}  \label{Th:univ_equality} For $r \ge 1$ and $1 \le k
<N$ we have
\begin{equation} \label{eq:univ_equality}
 \max_{ \varepsilon_j \in \{0,1\}}  \Big \| \prod_{j=1}^r
 A_{\varepsilon_j}|_{V_k} \Big \|_\infty  = \left\|S^r_{B_k} |_{\nabla^k}
 \right\|_\infty.
\end{equation}
\end{Theorem}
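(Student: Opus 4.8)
The plan is to combine the two results just established. Lemma~\ref{lemma:restricted=nonrestricted} already gives the right-hand equality $\|S^r_{\Bb_k}|_{\nabla^k}\|_\infty = \|S^r_{\Bb_k}\|_\infty$, so the real content of Theorem~\ref{Th:univ_equality} is to show that the non-restricted operator norm $\|S^r_{\Bb_k}\|_\infty$ equals $\max_{\varepsilon_j \in \{0,1\}} \|\prod_{j=1}^r A_{\varepsilon_j}|_{V_k}\|_\infty$. First I would recall from the proof of the Lemma the explicit formula
$$
 \|S^r_{\Bb_k}\|_\infty = \max_{\alpha \in \{0,\dots,2^r-1\}} \left| \sum_{\beta \in \ZZ} \big| B^{(r)}_k(\alpha - 2^r \beta) \big| \right|_\infty,
$$
which expresses the norm as a maximum over residues $\alpha$ modulo $2^r$ of the $\infty$-operator norm of a sum of absolute values of mask coefficients. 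The goal is to identify this with the corresponding quantity on the matrix-product side.

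Next I would invoke Proposition~\ref{prop:main} to rewrite the matrix products. In the univariate case each residue $\alpha \in \{0,\dots,2^r-1\}$ corresponds bijectively to a choice of digits $\varepsilon_1,\dots,\varepsilon_r \in \{0,1\}$ via the base-$2$ expansion $\alpha = \varepsilon_1 + 2\varepsilon_2 + \dots + 2^{r-1}\varepsilon_r$. Proposition~\ref{prop:main} shows that the action of $\cA_{\varepsilon_r}\cdots\cA_{\varepsilon_1}$ on $\nabla^k \bdelta I_n(\cdot - \widetilde{\beta})$ is given precisely by the coefficients $B^{(r)}_k(\varepsilon_1 + \dots + 2^{r-1}\varepsilon_r + 2^r\beta - \widetilde{\beta})$. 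Since the matrix $A_{\varepsilon_j}|_{V_k}$ is the finite-dimensional representation of $\cA_{\varepsilon_j}$ on the invariant subspace $V_k$, and $V_k$ is spanned by sequences derived from $\nabla^k \bdelta I_n$, the entries of the product $\prod_{j=1}^r A_{\varepsilon_j}|_{V_k}$ are exactly the entries of the subsequence $B^{(r)}_k(\alpha + 2^r \cdot - \widetilde{\beta})$ for the $\alpha$ matching the chosen digits. Taking the $\infty$-operator norm of this matrix then reproduces the inner sum $\sum_\beta |B^{(r)}_k(\alpha - 2^r\beta)|$, and maximizing over the digit choices $\varepsilon_j$ matches maximizing over residues $\alpha$.

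The main obstacle I anticipate is the careful bookkeeping that matches the index $\alpha$ of the $\infty$-norm formula with the digit vector $(\varepsilon_1,\dots,\varepsilon_r)$ and simultaneously identifies the $\infty$-\emph{operator} norm of the product matrix (a maximum absolute row sum over a block structure indexed by $\alpha,\beta \in [0,N]$) with the scalar expression $|\sum_\beta |B^{(r)}_k(\alpha - 2^r\beta)||_\infty$. One must verify that restricting $\cA_{\varepsilon_j}$ to $V_k$ does not lose any of the mask entries that contribute to the row sum — i.e.\ that the support bounds $\supp(\Bb^{(r)}_k) \subset [0,N-k]$ established in the Lemma guarantee the finite-dimensional block $A_{\varepsilon_j}|_{V_k}$ captures the full convolution — and that the maximum over the finitely many residues is genuinely attained, so that the two maxima coincide rather than merely bounding one another. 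Once this identification of entries and norms is made explicit, chaining it with the Lemma yields \eqref{eq:univ_equality} directly.
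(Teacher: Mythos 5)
Your strategy is the paper's own: chain Lemma~\ref{lemma:restricted=nonrestricted} with Proposition~\ref{prop:main}, identify the entries of $\prod_{j=1}^r A_{\varepsilon_j}|_{V_k}$ with the coefficients $B^{(r)}_k(\varepsilon_1+\dots+2^{r-1}\varepsilon_r+2^r\widetilde{\beta}-\beta)$ via the base-$2$ correspondence between digit vectors and residues, and read off the equality of the two maxima of row sums. But there is a genuine gap at the step you treat as automatic: from ``$V_k$ is \emph{spanned} by sequences derived from $\nabla^k \bdelta I_n$'' you conclude that the entries of $\prod_{j=1}^r A_{\varepsilon_j}|_{V_k}$ are exactly the entries of the corresponding subsequence of $\Bb^{(r)}_k$. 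Proposition~\ref{prop:main} only describes how $\cA_{\varepsilon_r}\cdots\cA_{\varepsilon_1}$ acts on the sequences $\nabla^k \bdelta I_n(\cdot-\widetilde{\beta})$; to deduce that the matrix of the restricted operator has precisely those coefficients as entries, you need these sequences to form a \emph{basis} of $V_k$, since expansion coefficients with respect to a linearly dependent spanning set are not unique, and the ``matrix representation'' is then not the one Proposition~\ref{prop:main} hands you. The paper cites exactly this independence from \cite{C2011} (Lemmas 4.4 and 4.9). This is not a formality: it is the precise point where the univariate and multivariate cases diverge. In the multivariate case Proposition~\ref{prop:main} still holds and the analogous set still spans $V_k$, but it is no longer a basis, and the identity \eqref{eq:univ_equality} fails (see Section~\ref{subsec:multi}). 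Since your argument never invokes linear independence, it would ``prove'' the multivariate statement as well, so as written it cannot be complete.

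You also do not address the vector case, which requires an extra construction. There the basis of $V_k$ is inhomogeneous: $(\nabla^k \bdelta e_j)^T(\cdot-\beta)$ with $\beta\in[0,N-k]$ for $j=1,\dots,m$, together with $\bdelta e_j^T(\cdot-\beta)$ with $\beta\in[0,N]$ for $j=m+1,\dots,n$. The clean identification of entries with $\Bb^{(r)}_k$ over a block index set $[0,N]$ (the one you describe) is valid only for the larger, linearly dependent spanning family, i.e., it yields $A_\varepsilon|_{\widetilde{V}_k}$; the paper then removes the rows and columns corresponding to $(\nabla^k \bdelta e_j)^T(\cdot-\beta)$, $\beta\in[N-k+1,N]$, $j=1,\dots,m$, to obtain $A_\varepsilon|_{V_k}$, and only then concludes as in the scalar case. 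Relatedly, your support bookkeeping conflates the two cases: the Lemma gives $\supp(\Bb^{(r)}_k)\subset[0,N+k]$ in the vector case, and $[0,N-k]$ only in the scalar case. The scalar half of your argument is essentially the paper's proof; the basis property and the vector-case trimming are what is missing to make the whole of it one.
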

\begin{proof} We start with the scalar case. Due to \cite[Lemma 4.4,
4.9]{C2011}, we have
\begin{equation} \label{Vk}
 V_k=\hbox{span}\left\{ \nabla^k \bdelta (\cdot -\beta) \ :\ \beta \in [0,N-k]
 \right\}.
\end{equation}
Thus, due to the fact that the spanning set above is also a basis for $V_k$ and by 
Proposition \ref{prop:main}, we get for any $r \ge 1$ and
$\varepsilon_j \in \{0,1\}$
$$
 A_{\varepsilon_r} \dots A_{\varepsilon_1}|_{V_k}=\Big[B^{(r)}_k(\varepsilon_1+\dots +
 2^{r-1}\varepsilon_r+2^r\widetilde{\beta}-\beta) \Big]_{\widetilde{\beta}, \beta \in
 [0,N-k]}.
$$
 Therefore,
$$
 \max_{ \varepsilon_j \in \{0,1\}}  \Big \| \prod_{j=1}^r
 A_{\varepsilon_j}|_{V_k} \Big \|_\infty=\|S^r_{B_k} \|_\infty
$$
and the claim follows by Lemma
\ref{lemma:restricted=nonrestricted}. 

In the vector case, define
as in \cite[Lemma 4.9]{C2011} the spanning set of $V_k$ to be
\small $$ 
  \left\{ \left(\nabla^k \bdelta  e_j\right)^T
  (\cdot -\beta) \ :\ \beta \in
  [0,N-k], \ j=1, \dots, m \right\} \bigcup   \left\{ \bdelta  e_j^T
  (\cdot -\beta) \ :\ \beta \in
  [0,N], \ j=m+1, \dots, n \right\}.
$$
\normalsize This set is also a basis for $V_k$. The corresponding matrix
representations of $\cA_\varepsilon|_{V_k}$ are obtain as follows.
First consider the set  $\widetilde{V}_k$ spanned by
\small $$ 
  \left\{ \left(\nabla^k \bdelta  e_j\right)^T
  (\cdot -\beta) \ :\ \beta \in
  [0,N], \ j=1, \dots, m \right\} \bigcup   \left\{ \bdelta  e_j^T
  (\cdot -\beta) \ :\ \beta \in
  [0,N], \ j=m+1, \dots, n \right\}.
$$
\normalsize
By Proposition \ref{prop:main} we get for any $r \ge 1$ and
$\varepsilon_j \in \{0,1\}$
$$
 A_{\varepsilon_r} \dots A_{\varepsilon_1}|_{\widetilde{V}_k}=\Big[B^{(r)}_k(\varepsilon_1+\dots +
 2^{r-1}\varepsilon_r+2^r\widetilde{\beta}-\beta) \Big]_{\widetilde{\beta}, \beta \in
 [0,N]}
$$
As the spanning set of $ \widetilde{V}_k$ is not a basis for $V_k$
we need to remove the rows and columns of
$A_{\varepsilon}|_{\widetilde{V}_k}$ that correspond to
$$
 \left\{ \left(\nabla^k \bdelta  e_j\right)^T
  (\cdot -\beta) \ :\ \beta \in
  [N-k+1,N], \ j=1, \dots, m \right\}
$$
Then the claim follows as in the scalar case.
\end{proof}

\begin{Remark}

(i) The size of the matrices $A_{\varepsilon}|_{V_k}$ in
\eqref{algorithm:branch_and_bound} is of importance for numerical
computations  of $(k, \infty)-$JSR and is determined by the dimension
of $V_k$. For a method that allows us to reduce the sizes of these
matrices see \cite{Protasov}.  In the scalar case, the restriction of
$A_{\varepsilon}$ to  $V_k$ in \eqref{Vk} is
equivalent to removing  trivial cycles of $A^*(z)$. 

(ii) Note that the choice of the matrix norm on the left hand-side
of \eqref{eq:univ_equality} is crucial. For any other matrix norm
the statement of the Theorem \ref{Th:univ_equality} is not true in
general, see examples below. This illustrates one of the
advantages of the numerical method arising from the JSR approach
and is exploited in \cite{GPro}, where the authors  approximate
the so-called extremal matrix norm that allows for exact
computations of the $(k,\infty)-$JSR.
\end{Remark}

Let us next illustrate the result of Theorem
\ref{Th:univ_equality} on some examples.

\begin{Example} \label{ex1}
Consider the $4-$point scheme with the mask given by
$$
 A^*(z)=\frac{1}{32} \left(-1+9z^2+16z^3+9z^4-z^6 \right)=
 \frac{1}{32} \left(1+z \right)^2 \left(-1+2z+6z^2+2z^3-z^4 \right).
$$
The corresponding subdivision scheme is $C^1$ \cite{HReif}. Due to
$$
 A^*(1)=1 \quad \hbox{and} \quad A^*(-1)=0,
$$
by \cite[Corollary 3.9]{C2011} we can compute the difference masks
$\Bb_1$ and $\Bb_2$ satisfying the equivalent formulation of  \eqref{iden:AnablaB}
$$
 (z-1)^k A^*(z)=B_k^*(z)(z^2-1)^k, \quad k=1,2,
$$ 
and given by
$$
  B^*_1(z)=
 \frac{1}{32} \left(1+z \right) B^*_2(z),  \quad B^*_2(z)=
 \frac{1}{32} \left(-1+2z+6z^2+2z^3-z^4 \right).
$$
By \cite[Lemma 2.4]{C2011} we have
$$
 V_1=\hbox{span} \Big\{ \nabla \bdelta (\cdot-\beta) \ : \ 0 \le \beta \le
 5
 \Big\}
$$
and
$$
 V_2=\hbox{span} \Big\{ \nabla^2 \bdelta (\cdot-\beta) \ : \ 0 \le \beta \le 4
 \Big\}.
$$
The corresponding matrix representations of
$\cA_{\varepsilon}|_{V_k}$, $k=1,2$, are
$$
 A_0|_{V_1}=\Big[B_1(0+2\widetilde{\beta}-\beta) \Big]_{\widetilde{\beta}, \beta
 \in \{0,5\}}= \frac{1}{16}\left[ \begin{array}{rrrrrr}-1&8&1&0&0&0   \\
   0&1&8&-1&0&0\\ 0&-1&8&1&0&0
  \\ \dots \\ 0&0&0&1&8&-1
\end{array}\right],
$$
$$
 A_1|_{V_1}=\Big[B_1(1+2\widetilde{\beta}-\beta) \Big]_{\widetilde{\beta}, \beta
 \in \{0,5\}}= \frac{1}{16}\left[ \begin{array}{rrrrrr}1&8&-1&0&0&0   \\
   -1&8&1&0&0&0\\ 0&1&8&-1&0&0
  \\ \dots \\ 0&0&1&8&-1&0
\end{array}\right],
$$
and
$$
 A_0|_{V_2}=\Big[B_2(0+2\widetilde{\beta}-\beta) \Big]_{\widetilde{\beta}, \beta
 \in \{0,4\}}= \frac{1}{16}\left[ \begin{array}{rrrrr}-1&6&-1&0&0   \\
   0&2&2&0&0\\ 0&-1&6&-1&0
  \\ \dots \\ 0&0&-1&6&-1
\end{array}\right],
$$
$$
 A_1|_{V_2}=\Big[B_2(1+2\widetilde{\beta}-\beta) \Big]_{\widetilde{\beta}, \beta
 \in \{0,4\}}= \frac{1}{16}\left[ \begin{array}{rrrrr}2&2&0&0&0   \\
   -1&6&-1&0&0\\ 0&2&2&0&0
  \\ \dots \\ 0&0&2&2&0
\end{array}\right].
$$
Hence, on the one hand we get
$$
 \|S_{B_1}\|_\infty=\max \Big\{ \|
 A_{0}|_{V_1}\|_\infty, \ \|
 A_{1}|_{V_1}\|_\infty \Big \}=\frac{5}{8}
$$
and
$$
 \|S_{B_2}\|_\infty=\max \Big\{ \|
 A_{0}|_{V_2}\|_\infty, \ \|
 A_{1}|_{V_2}\|_\infty \Big \}=\frac{1}{2}.
$$
On the other hand
$$
 \|S_{B_1}\|_\infty <\max \Big\{ \|
 A_{0}|_{V_1}\|_2, \ \|
 A_{1}|_{V_1}\|_2 \Big \}=0.7195 \dots
$$
and
$$
 \|S_{B_2}\|_\infty >\max \Big\{ \|
 A_{0}|_{V_2}\|_2, \ \|
 A_{1}|_{V_2}\|_2 \Big \}=0.467 \dots$$
\end{Example}

The second example is of a vector univariate subdivision scheme.

\begin{Example}
Let the mask $\Ab \in \ell_0^{2 \times 2}(\ZZ)$ be given by its
symbol
$$
A^*(z)=\left( \begin{array}{cc} 1/2 & 1/4 \\ 0 & 1/4 \end{array}
\right) +  \left( \begin{array}{cc} 1 & 3/4 \\ 0 & 1/2 \end{array}
\right)z+\left( \begin{array}{cc} 1/2 & 1/4 \\ 0 & 1/4 \end{array}
\right)z^2.
$$
Due to
$$
 A^*(1) \cdot (1,0)^T=(1,0)^T \quad \hbox{and} \quad  A^*(-1) \cdot
 (1,0)^T=(0,0)^T,
$$
by  \cite[Corollary 3.9]{C2011} there exist the difference mask
$\Bb_1$ satisfying the equivalent formulation of \eqref{iden:AnablaB}
$$
\left[\begin{array}{cc} z-1 & 0 \\ 0 &1 \end{array}\right] A^*(z)=
B^*(z) \left[\begin{array}{cc} z^2-1 & 0 \\ 0 &1 \end{array}\right]
$$
and given by
$$
 B^*_1(z)=\left( \begin{array}{rr} 1/2 & -1/4 \\ 0 & 1/4 \end{array}
 \right)+
 \left( \begin{array}{rr} 1/2 & -1/2 \\ 0 & 1/2 \end{array}
 \right)z+
 \left( \begin{array}{cc} 0 & 1/2 \\ 0 & 1/4 \end{array}
 \right)z^2+ \left( \begin{array}{cc} 0 & 1/4 \\ 0 & 0 \end{array}
 \right)z^3.
$$
Note that in the vector case, due to the definition of the
difference operator $\nabla$, the support of the difference mask
$\Bb_1$ is larger than that of $\Ab$. To compare the entries of
$A_\varepsilon |_{V_1}$ to the ones of $\Bb_1$, we first, as in
the proof of Theorem \ref{Th:univ_equality}, define $\widetilde{V}_1$
to be the set spanned by
$$
\left\{ \left(\nabla \bdelta  e_1\right)^T (\cdot -\beta) \ :\
\beta \in
  [0,2] \right\} \cup \left\{  \bdelta  e_2^T (\cdot
-\beta) \ :\ \beta \in
  [0,2]\right\}.
$$
Then we get
$$
  A_\varepsilon|_{\widetilde{V}_1}=\Big[B_1(\varepsilon+2\widetilde{\beta}-\beta)
 \Big]_{\widetilde{\beta},\beta
 \in [0,2]}, \quad \varepsilon \in \{0,1\}
$$
from which we obtain  $5 \times 5$ square matrices
$A_\varepsilon|_{V_1}$, $\varepsilon \in \{0,1\}$, by removing one but
last rows and one but last columns of the corresponding matrices
$A_\varepsilon|_{\widetilde{V}_1}$. Note that such a choice of
$A_\varepsilon|_{V_1}$, $\varepsilon \in \{0,1\}$, depends on the
structure of the basis of $V_1$ give by
$$
\left\{ \left(\nabla \bdelta  e_1\right)^T (\cdot -\beta) \ :\
\beta \in
  [0,1] \right\} \cup \left\{  \bdelta  e_2^T (\cdot
-\beta) \ :\ \beta \in
  [0,2]\right\}.
$$
And we finally  get
$$
 \|S_{\Bb_1}\|_\infty=\max \Big\{ \| A_{0}|_{V_1}\|_\infty, \ \|
 A_{1}|_{V_1}\|_\infty \Big \}=\frac{5}{4}.
$$
\end{Example}

\subsection{Multivariate scalar and vector cases} \label{subsec:multi}

The result of Theorem \ref{Th:univ_equality} does not hold in
general in the multivariate case. One of the reasons for that is
that the spanning set of $V_k$ in \cite[Lemma 4.9]{C2011} is not a basis for $V_k$, although
the representation from Proposition \ref{prop:main} is still
valid. Another reason is that the result of   Lemma
\ref{lemma:restricted=nonrestricted} does not hold in general in
the multivariate case, see \cite[Example 5.2]{C2011}. The
following example illustrates that the numerical methods arising
from the RSR and JSR approaches are indeed two different methods
for estimating the $(k,\infty)-$JSR.

\begin{Example}
Consider the scalar multivariate scheme $S_\Ab : \ell(\ZZ^2)
\rightarrow \ell(\ZZ^2)$ given by its symbol
$$
 A^*(z_1,z_2)=\frac{1}{8}\Big(1+z_1 \Big)\Big(1+z_2 \Big)\Big(1+z_1z_2 \Big).
$$
Due to
$$
 A^*(1,1)=1 \quad \hbox{and} \quad  A^*(e)=1, \quad e \in
 \{1,-1\}^2 \setminus \{1\},
$$
the first difference scheme satisfying the equivalent formulation of \eqref{iden:AnablaB}
$$
\left[\begin{array}{c} z_1-1 \\ z_2-1 \end{array}\right] A^*(z)=
B^*(z) \left[\begin{array}{cc} z_1^2-1 \\ z_2^2-1 \end{array}\right]
$$
exists 
and is given, e.g. by the mask
$$
\Bb_1= \frac{1}{2}
\begin{array}{cccc}
 & \left[ \begin{array}{cc} 1 & 0 \\ 0 & 0 \\ \end{array} \right]  \\ \\
\left[ \begin{array}{cc} 1 & 0 \\ 0 & 0 \\ \end{array} \right] & \left[
\begin{array}{cc} 1 & 0 \\ 0 & 1 \\ \end{array} \right] & \left[
\begin{array}{cc} 0 & 0 \\ 0 & 1 \\ \end{array} \right]  \\ \\
\left[ \begin{array}{cc} \bf{1} & \bf{0} \\ \bf{0} & \bf{1} \\
\end{array} \right] &
\left[ \begin{array}{cc} 0 & 0 \\ 0 & 1 \\ \end{array} \right] &
\end{array} \ .
$$
As all the entries of $\Bb_1$  are non--negative, we apply
\cite[Remark 4.13]{C2011} and get
$$
 \max_{\|\nabla \cb\|_\infty=1} \|S_\Bb \nabla
 \cb\|_\infty=\max_{\|\cb\|_\infty=1} \|S_\Bb \cb\|_\infty=1/2.
$$
Note that, using \cite[Proposition 2.9]{GMW}, we can even compute
$\rho_\infty(S_{\Bb_1} |_\nabla)$. To do that we view $S_{\Bb_1}$
as a scheme consisting of two scalar first difference schemes
$S_{\widetilde{\Bb}_1}$ and $S_{\widetilde{\widetilde{\Bb}}_1}$
given by the masks
$$
\widetilde{\Bb}_1=\begin{array}{ccccccc}\dots &0&0&0&0&0&\dots \\
\dots&0&0&1/2&0&0&\dots \\
\dots&0&1/2&1/2&0&0&\dots \\
\dots&0&\bf{1/2}&0&0&0&\dots \\
\dots &0&0&0&0&0&\dots
\end{array} \quad \quad
\widetilde{\widetilde{\Bb}}_1=\begin{array}{ccccccc}\dots &0&0&0&0&0&\dots \\
\dots&0&0&0&0&0&\dots \\
\dots&0&0&1/2&1/2&0&\dots \\
\dots&0&\bf{1/2}&1/2&0&0&\dots \\
\dots &0&0&0&0&0&\dots
\end{array}
$$
with the entry at bold at the position $(0,0)$. Then by
\cite[Proposition 2.9]{GMW} we get
$$
 \rho_\infty(S_{\Bb_1}|_\nabla)=\max \left\{ \rho\left([\widetilde{\Bb}_1(\alpha-2\beta)]_{\alpha, \beta \in
 \Omega}\right),  \ \rho\left([\widetilde{\widetilde{\Bb}}_1(\alpha-2\beta)]_{\alpha, \beta \in
 \Omega}\right) \right\},
$$
where $\Omega=[-2,2]^2 \cap \ZZ^2$ by \cite[p. 345]{GMW} is any associated
good set. A simple computation yields
$$
 \rho_\infty(S_{\Bb_1} |_\nabla)=\frac{1}{2}=\max_{\|\cb\|_\infty=1} \|S_\Bb
 \cb\|_\infty.
$$
Let us compare $\rho_\infty(S_{\Bb_1} |_\nabla)$ with the upper
estimate for $(0,\infty)-$JSR produced by
\eqref{algorithm:branch_and_bound}. We fix a basis of $V_1$ to be
the following set
$$
 \{\nabla_1 \bdelta(\cdot-\beta)\ : \ \beta \in \{0,1\} \times
 \{0,2\}\} \cup \{ \nabla_2 \bdelta(\cdot-\beta) \ : \ \beta=(1,0),(1,1)\}.
$$
Note that using Proposition \ref{prop:main} we get
$$
 \cA_{(1,0)} \nabla_2 \delta(\cdot-(1,0))= \frac{1}{2}
 \nabla_2 \delta.
$$
and
$$
 \frac{1}{2}
 \nabla_2 \delta= \frac{1}{2} \nabla_1 \delta-\frac{1}{2}
 \nabla_1 \delta(\cdot-(0,1))+\frac{1}{2} \nabla_2 \delta(\cdot-(1,0)),
$$
where $\nabla_2 \delta$ is not one of the above basis elements of
$V_1$. Thus, the corresponding row of $A_{(1,0)}|_{V_1}$ does not
consist of  the entries of the difference mask $\Bb_1$ as it
happens in the univariate case. For all other possible choices of
a basis of $V_1$ we get analogous structure for at leats one of
the rows of some of $A_\varepsilon|_{V_1}$. Therefore, we get
$$
 \max_{ \varepsilon_j \in \{0,1\}^2}  \Big \|
 A_{\varepsilon_j}|_{V_1} \Big \|_\infty = \frac{3}{2} >
 \|S_{B_1}\|_\infty = \|S_{B_1}|_{\nabla}\|_\infty =JSR_{\infty}.
$$
and also
$$
 \max_{ \varepsilon_j \in \{0,1\}^2}  \Big \|
 A_{\varepsilon_j}|_{V_1} \Big \|_2=1.188 \ldots >
 \|S_{B_1}|_{\nabla}\|_\infty.
$$
\end{Example}

Possible extensions of the result in \cite{Protasov} to the
multivariate  case  is currently under investigation. Such an
extension will not only allow  us to reduce the size of $
A_{\varepsilon_j}|_{V_k}$, but also may lead to different $V_k$
whose structure allows for better comparison of the RSR and JSR
approaches.

\subsection{Divergence of subdivision schemes} \label{example:divergent}
 This scalar bivariate example is taken from \cite{HJ98}. The mask is given by its
symbol
$$
 A^*(z_1,z_2)=\frac{1}{4} \left(\frac{1}{4}+z_1+\frac{3}{4}z_1^2+\frac{3}{4}z_2+z_1z_2+\frac{1}{4}z_1^2z_2 \right).
$$
Note that the mask satisfies  $A^*(1,1)=1$ and
$A^*(1,-1)=A^*(-1,1)=A^*(-1,-1)=0$. It has been shown in
\cite{BW92} that
\begin{equation}\label{prop:inftyJSR}
  \rho_\infty(\cA|_V)= \sup \left\{|\lambda|^{1/r}: \
    r>0, \ \lambda \in  \sigma \left( A_{\varepsilon_1}|_{V_1} \cdots
      A_{\varepsilon_r}|_{V_1} \right), \, \varepsilon_j \in
    \{0,1\}^s \right\},
\end{equation}
where $\sigma(M)$ denotes the spectrum, i.e., the set of all
eigenvalues, of the matrix $M$. In our case, one of the
eigenvalues of $A_{(0,0)}|_{V_1}$ is equal to one, see \cite[page
1192]{HJ98} for details, and we get
$$
 \rho_\infty(\cA|_{V_1}) \ge 1.
$$
This shows that the scheme is not uniformly convergent
\cite{CJR02}. To get the same conclusion using the restricted
spectral radius approach, we need to show that there exists $R \in
\NN$ such that for all $r
> R$ we have $\| S^r_{\Bb_1}|_\nabla\|_\infty \ge 1$, which is a tedious
task.  Another possibility to show that the scheme is not
uniformly convergent is to show that $S_{\Bb_1}$ does not converge
to a zero limit function. To do that determine first the
difference scheme $S_{\Bb_1}$ satisfying
$$
 \left[\begin{array}{c} z_1-1 \\ z_2-1 \end{array} \right]
 A^*(z_1,z_2)= B^*(z_1,z_2) \left[\begin{array}{c} z_1^2-1 \\ z_2^2-1 \end{array} \right]
$$
with the matrix--valued symbol $B^*(z_1,z_1)=\left(b^*_{i,j}
\right)_{i,j=1,2}$ with the entries
\begin{eqnarray}
 b^*_{11}(z_1,z_2)&=&\frac{1}{16}(1+3 z_1+3 z_2+z_1z_2), \notag \\
 b^*_{12}(z_1,z_2)&=&0,  \quad  \quad b^*_{21}(z_1,z_2)=\frac{1}{16}(2z_2-2), \notag \\
  b^*_{22}(z_1,z_2)
& =&\frac{1}{16}(z_1^2+4z_1+3). \notag
\end{eqnarray}
The symbol satisfies
$$
 B_1^*(1,1)=\frac{1}{4}\left[\begin{array}{cc} 2&0\\ 0 & 2  \end{array}
 \right]
$$
and does not indicate that the associated scheme is not zero
convergent. The problem is though that not all of the eigenvalues
of the sub-symbols
$$
 B^*_{1,\varepsilon}(z_1,z_2)=\sum_{\alpha \in \ZZ^2}
 B_1(\varepsilon-2\alpha)z^{\alpha}, \quad \varepsilon
 \in\{0,1\}^2,
$$
at $(1,1)$ are less than $1$, e.g
$$
 B_{1,(0,0)}(1,1)=\frac{1}{4}\left[ \begin{array}{rr} 1&0\\
 -2&4\end{array}\right].
$$
This violates a necessary condition for the convergence of
subdivision, see \cite{DM97}.  Thus, the spectral properties of
the difference scheme $S_{\Bb_1}$ also indicates that the scheme
is not uniformly convergent.


\noindent\hbox{\vtop{\hsize15pc\parindent0pt
Maria Charina\\
Fakult\"at f\"ur Mathematik\\
TU Dortmund\\
Vogelpothsweg 87\\
D--44227 Dortmund\\
{\tt maria.charina@uni-dortmund.de} }}

\end{document}